\documentclass[12pt]{amsart}             
\usepackage[fontsize=14]{scrextend}        

\newtheorem{theorem}{Theorem}[section]

\newtheorem{corollary}[theorem]{Corollary} 
\theoremstyle{definition}
\newtheorem{definition}[theorem]{Definition}

\linespread{1.05}              

\usepackage{fullpage} 
\usepackage[all]{xy} 
\usepackage{enumerate,dsfont,txfonts,mathptmx,txfonts,newtxtext}   

\usepackage[utf8]{inputenc}
\usepackage[T1]{fontenc}
\usepackage{pdfrender}

\usepackage[dvipsnames]{xcolor}   
\usepackage{xr-hyper}
\usepackage[linktocpage=true,colorlinks=true,hyperindex,citecolor= Royal Blue,linkcolor=Royal Blue]{hyperref}

\begin{document}
\pdfrender{StrokeColor=black,TextRenderingMode=2,LineWidth=.01pt}

\title{Multidimensional population modeling with  transition structures}

\author{Amartya Goswami}

\address[a]{Department of Mathematics and Applied Mathematics,
University of Johannesburg, 
P.O. Box 524, 
Auckland Park 
2006, 
South Africa}

\address[b]{National Institute for Theoretical and Computational Sciences (NITheCS), South Africa.}

\email{agoswami@uj.ac.za}

\begin{abstract}
The aim of this paper is to describe a population model with transition. We analyze the spectral properties of the transition matrix considering both irreducible and reducible structures. We give physical interpretations of these properties to population dynamics.
\end{abstract}

\makeatletter
\@namedef{subjclassname@2020}{%
\textup{2020} Mathematics Subject Classification}
\makeatother

\subjclass[2020]{15A18, 47A10, 92D25} 

\keywords{eigenvalues, eigenvectors, spectrum, population dynamics} 
 
\maketitle 

\section{Introduction}
Formal demography is concerned with the mathematical description of human (or nonhuman) populations, particularly with the demographic processes such as birth, death, aging \textit{etc}., and its long time  behaviour. Our work is based on models, which deals with the extension of that focus to include the transition of the members of the population among several regions. One of the first major contribution along this line is by Rogers \cite{Ro1}.

Following \cite{Ro2} and \cite{Ro3}, ``formal multi-regional demography, therefore, is concerned with the mathematical description of the evolution of populations over time and space. The trifold focus of such descriptions is on the \textit{stocks} of population groups at different points in time and locations in space, the vital \textit{events} that occur among these populations, and the \textit{flows} of members of such populations across the spatial borders that delineate the constituent regions of the multi-regional population system.
A biological population may experience multiple states in two ways:

First, it may visit different states in the course of time, the whole population experiencing the same (possibly age-specific) vital rates at any one time. For example, a troop of baboons moves from one area to another of its range, with associated changes in food supply and risks of predation \cite{Al}. A human population experiences fluctuating crop yields from one year to the next, with associated effects on childbearing and survival. These are \textit{serial} changes of state of a \textit{homogeneous} population.

Second, the population may be subdivided into \textit{inhomogeneous} subpopulation that experience different states in \textit{parallel}. Individuals may migrate from one state to another in the course of time. The states may corresponds to geographical regions, work status, marital status, health status, or other classifications \cite{Ro3}. Individuals within a given state at a given time are assumed to be homogeneous with respect to their vital rates.  In general multistate stable population theory is used for two reasons:
\begin{enumerate}
[\upshape (i)]
\itemsep -.2em\itemsep -.2em
\item To explore the relation between individual life histories and  population characteristics, and
  
\item to explore the consequences of current life histories and variations in life histories. Stable populations are a population that experiences particular demographic regimes (transitions) over a long period. The theory is used to magnify the effects of a current demographic regime and to assess the consequences of small changes in demographic behaviour (microscopic view).''
\end{enumerate}
For more detailed study of multidimensional demography, we refer to \cite{Ro2}, \cite{LR}.

We arrange our work as follows: First we describe the population model with all the necessary assumptions. Next we  analyze the spectral properties of our transition matrix using Perron-Frobenious theorem. Here we will do the spectral analysis separately for two cases, namely irreducible and reducible. We give the physical significances of these spectral properties. At the conclusion, we point out some of the possible generalisations of this work. 
  
\subsection{The model}
We are going to  consider a closed (with no immigration and emigration), one-sex population model which is linear and continuously depends on  age. The whole population is additionally subdivided into several groups. We assume the population is distributed into $n$ number of patches or states which may be interpreted as geographical areas or any other classifications with proper meaning. The basic assumptions
regarding this population model are as follows:

\begin{enumerate}[\upshape (i)]
\itemsep -.2em\itemsep -.2em
\item Migrations between patches occur in a discrete manner.

\item The migrations occur on a much faster time scale than the
demographic events
(ageing, births and deaths).

\item The migration rates between the states, death rates and
the birth rates may be age dependent.

\item The movement process between the patches is conservative
with respect to the life dynamics of the population.
\end{enumerate}

To describe the model, let $u_i (a, t)$, for $1 \leqslant i \leqslant n $,  be the population at time $t$ of individuals residing in patch $i$ and being of age $a$ so that the number of inhabitants in the age interval of $a + da$ is approximately equal to $u_i(a, t) da$ and precisely $ \int_{a}^{a + da} u_i(s, t) \,ds.$ Let $M(a):= \text{diag}[\mu_i(a)]_{1 \leqslant i \leqslant n}$ denote the mortality matrix in which $\mu_i(a)$ is the mortality rate at age $a$ in the $i$th patch. Let the (transition) matrix $C:=[c_{ij}]_{1 \leqslant i,j \leqslant n}$ describe the transfer of individuals between patches, that is,  $c_{i j}$, for \,$i \neq j $, denote the transition rates  from state $j$ to state $i$ and the diagonal elements $c_{i i}$,  defined as $c_{i i}:=- \sum_{i =1}^{n} c_{j i}$  indicate the net loss of population in $i$th state due to transition to other states. The matrix $C$ is known as \emph{Kolmogorov matrix}. Let $B(a):=[\beta_i(a)]_{1 \leqslant i \leqslant n}$ denote the fertility matrix  in which $\beta_i(a)$ is the average number of offspring of state $i$ per unit time produced by an individual at age $a$. Let $\phi(a)$ be the initial population vector. Migration between the patches occur at a much faster time scale than the demographic processes such as aging, and this is incorporated by introducing a large parameter $1/\epsilon$ multiplying  the transition matrix $C$. For all our future work, we will assume that $\epsilon > 0$ is an arbitrary small number.

Denoting the population vector as $u(a,t):= (u_1(a,t),\dots,u_n(a,t))$  we have the following system of equations (in vector form) with proper initial and boundary conditions
\begin{align*}
u_t =& - u_a - Mu+ \frac{1}{\epsilon} \,Cu,
\\
u(0, t) =&   \int_{0}^{\infty} B(s)u(s,t)\,ds,
\\
u(a,0) =&  \phi(a),
\end{align*}
where subscripts $t$ and $a$, respectively denote differentiation with respect to the variables $t$ and $a$.  From the practical point of view, one can always assume a finite upper limit of fertility age (say, $w$)  and the  rest of the range of integral (\textit{i.e.}, from $w$ to infinity) as zero. This model has been studied in \cite{Ar1}, \cite{Ar}, \cite{Br}, \cite{Li}, where this model has been approximated to the averaged model. In \cite{BGS}, a detailed error analysis of this approximation has been done. All of these works based on the assumption of irreducibility of the transition matrix.
 
\section{The transition matrix}
Let us first assume that there is a movement of the members between any two patches of the region and it is observed that in long run, the population stabilizes.This type of behaviour can be represented with an irreducible matrix. But from practical point of view it is also quite reasonable that a group from the population dispersed from one region to other regions. For example, due to economic condition or natural disasters, people move from one part of a country to other parts and the left part remains without any people for a long time. Also with age, old people hardly moves to other regions and they stay remain at the same place but their off-springs moves else where. Therefore considering all these issues, we can think of a transition matrix in which some patches are confined in the sense that once a group enter these patches, never leaves those patches. This type of transition phenomenon can be modeled with reducible matrix.  

In the next two sections, we are going to discuss the detailed spectral properties of our transition matrix as the long time behaviour of a population depends on the dominant eigenvalue and the corresponding eigenvector of the transition matrix. We will  look on irreducible and reducible cases separately. But, first let us give some necessary definitions.
 
\begin{definition} 
An $m \times n$ matrix $A = [a_{ik}]$ with real elements is called \emph{non-negative} $(A \geqslant 0)$ or \emph{positive} $(A >0)$, if all the elements of the matrix $A$ are non-negative $($respectively positive$)$: $a_{ik} \geqslant 0$ $($respectively $>0$$)$. Similarly a vector $k:=(k_1, \ldots, k_n)$ is said to be non-negative $(k \geqslant 0)$ or positive $(k>0)$ accordingly $k_i \geqslant 0$ or $k_i >0$, where $1 \leqslant i \leqslant n$.
\end{definition}
 
\begin{definition}
Let $A$ be an $n \times n $ matrix. The set of eigenvalues of A, called \emph{spectrum} of $A$, will be denoted by $\sigma(A)$. The \emph{spectral radius} of $A$ is defined as $\text{Sp}(A):= \max\{|\lambda| : \lambda \in \sigma(A)  \} $. The \emph{spectral bound} of $A$ is defined as $s(A):= \max\{\mathfrak{R} \lambda : \lambda \in \sigma(A) \}$, where $\mathfrak{R} \lambda $ denotes the real part of $\lambda$.
\end{definition}
 
\begin{definition}
Let $A$ be an $n \times n$ matrix. An eigenvalue $\lambda$ of $A$ is called a \emph{simple eigenvalue} of $A$ if its algebraic multiplicity is one. An eigenvalue $\lambda_0$ of $A$ is called a \emph{dominant eigenvalue} of $A$ if $\lambda_0 > \mathfrak{R} \lambda$ for all $\lambda \in \sigma(A)\setminus \{\lambda_0\}$ and the corresponding eigenvector is called a \emph{Perron vector}.
\end{definition}

\begin{definition}
The matrix $A = [a_{ik}]_{1\, \leqslant \,i,\,k \, \leqslant \, n}$ is called \emph{reducible}, if there is a permutation of the indices which reduces it to the form $$\tilde{A}:= \left[\begin{array}{lr} B & D \\ 0 & T\end{array}\right], $$ where $B$ and $T$ are square matrices. Otherwise the matrix $A$ is called \emph{irreducible}.
\end{definition} 

\subsection{Irreducible transition matrix}
We shall first consider irreducible matrix structure. An irreducible transition matrix does have special spectral properties which we describe in Theorem \ref{402}.  Results \smash{(Theorem \ref{PF})} are available only for irreducible non-negative matrices. Since our transition matrix has non-negative off-diagonal elements, we need specific results for this type of matrices which can be obtained using Theorem \ref{PF} and we prove them in \smash{Corollary \ref{abc}}.

\begin{theorem}
{\rm{(Perron--Frobenius)}} An irreducible non-negative matrix always has a positive dominant, simple eigenvalue. Corresponding to this dominant eigenvalue, there is an eigenvector with positive coordinates.
\label{PF}
\end{theorem}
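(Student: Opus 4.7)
The plan is to reduce the irreducible non-negative case to the strictly positive case via the matrix $B:=(I+A)^{n-1}$, which has strictly positive entries whenever $A\geqslant 0$ is irreducible. Expanding $B=\sum_{k=0}^{n-1}\binom{n-1}{k}A^{k}$ and reading $(A^{k})_{ij}$ as a count of directed paths of length $k$ in the digraph of $A$, irreducibility (equivalently, strong connectedness) forces some $(A^{k})_{ij}$ with $k\leqslant n-1$ to be positive for every pair $(i,j)$. Since $A$ and $B$ are polynomials in one another they commute and share eigenvectors, while the spectral mapping theorem relates their spectra through $\mu\mapsto(1+\mu)^{n-1}$.

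For the positive matrix $B$ I would obtain a Perron vector via Brouwer's fixed-point theorem applied to the continuous self-map $f(x)=Bx/\|Bx\|_{1}$ of the simplex $\Delta=\{x\geqslant 0:\sum_{i}x_{i}=1\}$. Positivity of $B$ makes $f$ well-defined and sends $\Delta$ into its interior, so a fixed point $v$ satisfies $Bv=rv$ with $r=\|Bv\|_{1}>0$, and the identity $v=r^{-1}Bv$ forces $v>0$. The same construction applied to $B^{\top}$ produces a strictly positive left eigenvector $u$ with $B^{\top}u=ru$, the two eigenvalues coinciding after pairing $\langle u,Bv\rangle=\langle B^{\top}u,v\rangle$ and using $\langle u,v\rangle>0$.

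The left eigenvector $u$ then delivers dominance and simplicity for $B$. For any eigenpair $(\nu,w)$ the coordinatewise inequality $|Bw|\leqslant B|w|$ pairs against $u$ to yield $|\nu|\langle u,|w|\rangle\leqslant r\langle u,|w|\rangle$, whence $|\nu|\leqslant r$; when equality holds, positivity of $B$ forces all entries of $w$ to share a common phase, so $w=e^{i\theta}|w|$ and $\nu=r$. Algebraic simplicity follows by supposing a generalized eigenvector relation $(B-rI)w=v$ and pairing with $u$ to reach the contradiction $0=\langle(B^{\top}-rI)u,w\rangle=\langle u,v\rangle>0$.

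The main obstacle is upgrading $|\nu|<r$ on $B$ to $\mathfrak{R}\mu<\lambda$ on $A$, since an irreducible $A$ may possess several eigenvalues of modulus $\lambda$. Writing $\mu=\alpha+i\beta$ for any $\mu\in\sigma(A)\setminus\{\lambda\}$, one checks from the spectral mapping and the simplicity of $r$ that the only preimage of $r$ lying in $\sigma(A)$ is $\lambda$ itself, so the strict inequality $(1+\alpha)^{2}+\beta^{2}<(1+\lambda)^{2}$ holds; combined with $\lambda\geqslant 0$ (which follows because $Av$ is a non-negative multiple of the positive vector $v$) this forces $\alpha<\lambda$. Positivity and simplicity of the Perron data transfer to $A$ at once, since the one-dimensional $r$-eigenspace of $B$ is $A$-invariant and hence an eigenspace of $A$, while any Jordan block of $A$ at $\lambda$ would lift through $(I+A)^{n-1}$ to a Jordan block of $B$ at $r$, contradicting the simplicity already obtained.
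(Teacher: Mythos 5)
Your argument is essentially correct, but note that the paper does not actually prove Theorem \ref{PF}: it cites Gantmacher \cite[p.~53]{G}, where the classical Frobenius argument proceeds via the variational (Collatz--Wielandt) characterization of the Perron root, maximizing $\min_{i}(Ax)_i/x_i$ over the simplex. Your route --- regularizing to the strictly positive matrix $B=(I+A)^{n-1}$, producing a Perron pair for $B$ from Brouwer's fixed-point theorem on the simplex, and transferring back to $A$ through the spectral mapping $\mu\mapsto(1+\mu)^{n-1}$ --- is a genuinely different, self-contained path. Its chief payoff here is that it cleanly handles the one point that is delicate for this paper's formulation: ``dominant'' is defined by $\lambda>\mathfrak{R}\mu$, and an irreducible non-negative matrix can have several eigenvalues of modulus $\lambda$ (the imprimitive case), so modulus-dominance is simply false in general. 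Your chain $|1+\mu|^{n-1}\leqslant r$ with strictness for $\mu\neq\lambda$ (via uniqueness of the preimage of $r$ forced by its algebraic simplicity for $B$), hence $(1+\alpha)^2+\beta^2<(1+\lambda)^2$ and so $\alpha<\lambda$, is exactly the right way to recover real-part dominance.

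Two small points should be tightened. First, the ``algebraic simplicity'' step only shows $v\notin\operatorname{ran}(B-rI)$, which excludes a Jordan block of size at least two over $v$ but not, by itself, a two-dimensional eigenspace. You need to add that $\ker(B-rI)$ is one-dimensional; this does follow from your own equality analysis, since every $r$-eigenvector of $B$ is a unimodular multiple of a strictly positive vector, whereas a two-dimensional eigenspace would contain a real eigenvector with entries of both signs --- but the step should be stated before the generalized-eigenvector pairing is invoked. Second, you conclude only $\lambda\geqslant 0$, while the theorem asserts a \emph{positive} eigenvalue: since $v>0$ and $A\geqslant 0$, the equality $Av=0$ would force $A=0$, which is reducible for $n\geqslant 2$, so in fact $\lambda>0$ outside the degenerate $1\times 1$ zero case.
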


\begin{proof} 
See \cite[p. 53]{G}. 
\end{proof}

\begin{corollary}
{\rm{\cite{S}}} A matrix $A$ with non-negative off-diagonal entries has the following properties:
\begin{enumerate}[\upshape (i)]
\itemsep -.2em
\item The spectral bound $s(A)$  is in $\sigma(A)$ and there is a vector $v > 0$ $($\textit{i.e.} each component of $v$ is positive$)$ such that \smash{$Av = s(A)v$}.

\item $\mathfrak{R} \lambda <s(A)$ for all $\lambda \in \sigma(A)\setminus \{s(A)\}$.
 
\item If, in addition, $A$ is irreducible, then $s(A)$ has algebraic multiplicity one.
\end{enumerate} 
\label{abc}
\end{corollary}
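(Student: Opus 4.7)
My plan is to reduce the corollary to Theorem \ref{PF} via a diagonal shift. Since $A$ has non--negative off--diagonal entries, choosing any $\alpha \geqslant \max_i (-a_{ii})_+$ makes $A + \alpha I$ entry--wise non--negative. This shift preserves eigenvectors, translates the spectrum by $\alpha$ (so $\sigma(A+\alpha I)=\{\lambda+\alpha : \lambda\in\sigma(A)\}$), and leaves the off--diagonal zero pattern unchanged; in particular $A+\alpha I$ is irreducible if and only if $A$ is. All three assertions should now follow by transferring the conclusions of a suitable Perron--Frobenius statement from $A+\alpha I$ back to $A$.

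When $A$ (and hence $A+\alpha I$) is irreducible, Theorem \ref{PF} applied to $A + \alpha I$ supplies a simple, dominant, positive eigenvalue $\mu_0$ together with a strictly positive eigenvector $v$ with $(A + \alpha I)v = \mu_0 v$. Undoing the shift gives $Av = (\mu_0 - \alpha)v$ with $v > 0$. The dominance inequality $\mu_0 > \mathfrak{R}(\lambda + \alpha)$ for all $\lambda \in \sigma(A)\setminus\{\mu_0-\alpha\}$ rearranges to $\mu_0 - \alpha > \mathfrak{R}\lambda$, which identifies $\mu_0 - \alpha$ with $s(A)$; the algebraic multiplicity of $s(A)$ equals that of $\mu_0$, which is one. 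This disposes of (i)--(iii) simultaneously in the irreducible case.

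For (i) and (ii) without the irreducibility hypothesis, I would appeal to the general (reducible) Perron--Frobenius theorem for non--negative matrices, which still guarantees that the spectral radius is an eigenvalue with a non--negative eigenvector. Applied to $A + \alpha I$, this yields $\text{Sp}(A+\alpha I)\in\sigma(A+\alpha I)$ with some $v \geqslant 0$. Since every $\mu\in\sigma(A+\alpha I)$ obeys $\mathfrak{R}\mu \leqslant |\mu|\leqslant \text{Sp}(A+\alpha I)$, any distinct eigenvalue $\mu$ with $\mathfrak{R}\mu = \text{Sp}(A+\alpha I)$ would force $|\mu| = \mathfrak{R}\mu$, making $\mu$ real and equal to $\text{Sp}(A+\alpha I)$, a contradiction. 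Hence $s(A+\alpha I) = \text{Sp}(A+\alpha I)$ with strict inequality for every other eigenvalue, and translating back proves (i) (with $v\geqslant 0$) and (ii).

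The only real subtlety I foresee, rather than a serious obstacle, is the strict positivity $v > 0$ asserted in (i) in the absence of irreducibility: the reducible Perron--Frobenius only delivers $v \geqslant 0$, and genuine strict positivity is intrinsically an irreducibility feature. I would handle this either by reading (i) as implicitly invoking the structure under which $v\geqslant 0$ can be promoted to $v>0$, or by observing that in every subsequent use of the corollary an irreducibility (or connectedness) hypothesis is in force. Beyond this caveat, the argument is a routine transfer of the conclusions of Theorem \ref{PF} along the shift $A\mapsto A+\alpha I$.
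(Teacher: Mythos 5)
Your proof takes essentially the same route as the paper: shift $A$ by a multiple of the identity to obtain an entrywise non-negative matrix, apply Perron--Frobenius to the shifted matrix, and translate the spectrum back by noting $\sigma(A+\alpha I)=\alpha+\sigma(A)$ and that the shift preserves irreducibility. Your version is in fact the more careful one --- the paper's proof does not separately argue (ii) or (iii) and silently mixes the irreducible and general forms of Perron--Frobenius --- and your caveat about strict positivity in (i) is well taken: without irreducibility only $v\geqslant 0$ can be guaranteed (e.g.\ $A=\mathrm{diag}(0,-1)$ has $s(A)=0$ but no positive eigenvector), so part (i) as stated is too strong and should read $v\geqslant 0$ unless irreducibility is assumed.
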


\begin{proof} $A + c I \geqslant 0$ for all large $c$  \,since $A$ has non-negative off-diagonal entries. Therefore Theorem \ref{PF} applies to $A + c I$ for such $c$. In particular, $\text{Sp}(A + c I)$ is positive and it is an eigenvalue of $A + c I$ for all large $c$. Moreover there exists a corresponding non-negative eigenvector. Since adding $c I$ to $A$ results in $\sigma (A + c I ) = c + \sigma(A)$, it follows from Theorem \ref{PF}, that $s (A + c I) = \text{Sp}(A + c I) = s(A) + c$ and therefore $s (A)$ is an eigenvalue of $A$. Also, adding $c I$ to $A$ preserves the property of irreducibility in the sense that $A$ is irreducible if and only if $A +c I$ is irreducible. 
\end{proof}

Using Corollary \ref{abc}, we can now prove the following spectral properties of a Kolmogorov matrix $C$.

\begin{theorem}
An irreducible transition matrix $C$ has zero as a simple, dominant eigenvalue and the rest of the eigenvalues have negative real parts. Moreover, $1$ is a left eigenvector corresponding to the zero eigenvalue of $C$. 
\label{402}
\end{theorem}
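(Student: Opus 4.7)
The plan is to combine the structural property that the columns of $C$ sum to zero with Corollary \ref{abc}. First, from the definition $c_{ii}=-\sum_{j\neq i}c_{ji}$, the sum of the entries of column $i$ of $C$ is
\[
\sum_{j=1}^{n} c_{ji} = c_{ii}+\sum_{j\neq i}c_{ji}=0,
\]
so the row vector $\mathbf{1}=(1,\dots,1)$ satisfies $\mathbf{1}C=0$. This immediately proves the last assertion of the theorem and also shows $0\in\sigma(C)$, hence $s(C)\geqslant 0$.

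Next, since $C$ has non-negative off-diagonal entries and is irreducible, Corollary \ref{abc} applies: $s(C)\in\sigma(C)$ is a simple eigenvalue with a strictly positive eigenvector, and $\mathfrak{R}\lambda<s(C)$ for every other $\lambda\in\sigma(C)$. To finish, it therefore suffices to prove the reverse inequality $s(C)\leqslant 0$.

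For this, I would pick $c>0$ large enough that $A:=C+cI\geqslant 0$. By Theorem \ref{PF} (or already by Corollary \ref{abc}), $s(A)=\mathrm{Sp}(A)$, and the spectral radius is bounded above by any induced matrix norm. Using the maximum column-sum norm and the fact that each column of $C$ sums to $0$, each column of $A$ sums to $c$, so $\mathrm{Sp}(A)\leqslant c$. Since $\sigma(A)=c+\sigma(C)$ gives $s(A)=s(C)+c$, we obtain $s(C)\leqslant 0$. Combined with $s(C)\geqslant 0$, this forces $s(C)=0$, and Corollary \ref{abc} then yields that $0$ is a simple eigenvalue, dominant over the rest of $\sigma(C)$, which must therefore lie strictly in the open left half-plane.

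The only delicate step is the bound $s(C)\leqslant 0$; everything else is a direct application of Corollary \ref{abc}. The natural pitfall here is trying to work with $C$ directly (which is not non-negative), so the shift $C\mapsto C+cI$ is essential, and one must remember that this shift preserves both non-negativity of off-diagonal entries and irreducibility, as already noted in the proof of Corollary \ref{abc}.
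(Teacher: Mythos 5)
Your proof is correct. The overall skeleton matches the paper's: both arguments start from the column sums of $C$ being zero (giving $\mathbf{1}C=0$, hence the left-eigenvector claim and $0\in\sigma(C)$), and both lean on Corollary \ref{abc} for existence, simplicity, and dominance of $s(C)$. Where you diverge is in the key step $s(C)=0$. The paper argues in one stroke: take the positive right eigenvector $k$ with $Ck=s(C)k$, sum the $n$ scalar equations (equivalently, pair with the left eigenvector $\mathbf{1}$) to get $0=s(C)(k_1+\cdots+k_n)$, and conclude $s(C)=0$ since $k>0$. You instead split it into two inequalities: $s(C)\geqslant 0$ because $0$ is already known to be an eigenvalue, and $s(C)\leqslant 0$ via the induced column-sum norm of the shifted non-negative matrix $C+cI$, whose columns each sum to $c$, so $\mathrm{Sp}(C+cI)\leqslant c$. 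Both are sound; the paper's pairing argument is shorter, while your norm bound has the advantage of not using positivity (or even existence) of the right Perron vector, so it transfers verbatim to the reducible case of Theorem \ref{Ma}, where the paper's eigenvector is only non-negative. One small point worth making explicit in your write-up: the identification of the maximum column sum of $C+cI$ with its induced $\ell^{1}$ norm uses that $C+cI\geqslant 0$, which is exactly why the shift is needed there and not merely to invoke Perron--Frobenius.
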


\begin{proof} 
The fact that $C$ has zero as an eigenvalue follows from the fact that the sum of the entries of its column is zero.  From Corollary \ref{abc}, it follows that zero is a simple eigenvalue of $C$.  Let $k$ be the corresponding right eigenvector of the zero eigenvalue. Now let us prove the rest of the part; that is, zero is a dominant eigenvalue. Following Corollary \ref{abc}, we have $$Ck = s(C)k,$$ which, in fact is a system of linear algebraic equations. Summing over this system of equations we obtain (due to the property of the matrix $C$) $$0 = s(C) ( k_1 + \cdots + k_n ),$$ where $k :=(k_1, \ldots, k_n )$.
The reason of getting zero on the left hand side is that the sum of the entries of each column  of $C$ is zero. Since  from \smash{Corollary \ref{abc}}, $k>0$, we  must have $s(C) = 0$.
Since $s(C)$ is defined as the largest number among the real parts of the eigenvalues of $C$, the result is proved.
Since the sum of the elements of a column of $C$ is zero, it follows that $1$ is a left eigenvector corresponding to the zero eigenvalue of $C$ and this completes the proof. \end{proof}

\subsection{Reducible transition matrix}
The situation changes quite a lot for a reducible matrix. Instead of \smash{Theorem \ref{PF}}, we have the following result, where the dominant eigenvalue is not any more simple and the corresponding eigenvector looses its positivity.

\begin{theorem}
The spectral radius of a non-negative matrix $A$ is a dominant eigenvalue   of $A$ and there is a corresponding eigenvector $k\geqslant 0$.
\label{P-F}
\end{theorem}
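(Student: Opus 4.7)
The plan is to deduce this reducible version of Perron--Frobenius from the irreducible version (Theorem \ref{PF}) by a perturbation-and-limit argument. The idea is that a non-negative matrix $A$ can be approximated by strictly positive matrices, to which the full irreducible theorem applies, and then one passes to the limit.

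First I would set $A_{\epsilon}:=A+\epsilon J$, where $J$ is the $n\times n$ matrix of ones and $\epsilon>0$. Each $A_{\epsilon}$ is strictly positive, hence irreducible and non-negative, so Theorem \ref{PF} yields a simple positive dominant eigenvalue $\lambda_{\epsilon}=\mathrm{Sp}(A_{\epsilon})$ with a strictly positive eigenvector $v_{\epsilon}$. Normalising $v_{\epsilon}$ so that $\|v_{\epsilon}\|_{1}=1$, one has a family $(\lambda_{\epsilon},v_{\epsilon})$ lying in the compact set $[0,\mathrm{Sp}(A_{1})]\times\{v\geqslant 0:\|v\|_{1}=1\}$ for all small $\epsilon$, using the componentwise monotonicity $B\leqslant B'\Rightarrow\mathrm{Sp}(B)\leqslant\mathrm{Sp}(B')$ for non-negative matrices to bound $\lambda_{\epsilon}$.

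Next I would extract a subsequence $\epsilon_{k}\downarrow 0$ along which $\lambda_{\epsilon_{k}}\to\lambda$ and $v_{\epsilon_{k}}\to k$ with $k\geqslant 0$ and $\|k\|_{1}=1$, so $k\neq 0$. Passing to the limit in $A_{\epsilon_{k}}v_{\epsilon_{k}}=\lambda_{\epsilon_{k}}v_{\epsilon_{k}}$ gives $Ak=\lambda k$, so $\lambda\in\sigma(A)$ with the claimed non-negative eigenvector $k$. It remains to identify $\lambda$ with the spectral radius. On the one hand $|\lambda|\leqslant\mathrm{Sp}(A)$ by the very definition of the spectral radius, so $\lambda\leqslant\mathrm{Sp}(A)$ since $\lambda\geqslant 0$. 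On the other hand, the monotonicity of $\mathrm{Sp}$ used above gives $\mathrm{Sp}(A)\leqslant\mathrm{Sp}(A_{\epsilon})=\lambda_{\epsilon}$ for every $\epsilon>0$, and letting $\epsilon_{k}\to 0$ yields $\mathrm{Sp}(A)\leqslant\lambda$. Hence $\lambda=\mathrm{Sp}(A)$, and the dominance $\mathrm{Sp}(A)\geqslant\mathfrak{R}\mu$ for all $\mu\in\sigma(A)$ is immediate from the inequality $\mathfrak{R}\mu\leqslant|\mu|\leqslant\mathrm{Sp}(A)$.

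The main obstacle I expect is justifying the limit passage cleanly. The convergence of eigenvectors requires the normalisation plus compactness argument above, and the identification $\lambda=\mathrm{Sp}(A)$ hinges on the monotonicity lemma $A\leqslant A'\Rightarrow\mathrm{Sp}(A)\leqslant\mathrm{Sp}(A')$ for non-negative matrices, which itself is a small consequence of Theorem \ref{PF} applied to $A'$ together with the Collatz--Wielandt characterisation; if that lemma is not taken for granted one has to insert a short supplementary argument. The rest of the statement (simplicity is \emph{not} claimed here, positivity of the eigenvector is weakened to $k\geqslant 0$) is exactly what the limiting procedure delivers, since simplicity and strict positivity are the two properties that can genuinely be lost in the limit, reflecting the presence of the absorbing patches described in the motivating discussion.
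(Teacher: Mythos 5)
Your argument is sound, but note that the paper does not prove this theorem at all: it simply cites Gantmacher (p.\ 66). Your perturbation-and-limit scheme --- replace $A$ by the positive matrix $A+\epsilon J$, invoke Theorem \ref{PF}, normalise the Perron vectors, and extract a convergent subsequence --- is in fact the classical way the non-negative case is deduced from the positive/irreducible case in that reference, so you have supplied the standard proof that the paper leaves implicit. Two loose ends are worth tightening. First, you lean on the monotonicity $B\leqslant B'\Rightarrow\mathrm{Sp}(B)\leqslant\mathrm{Sp}(B')$, which you correctly flag as needing its own justification; either prove it via $\mathrm{Sp}(B)=\lim_m\|B^m\|^{1/m}$ together with $0\leqslant B^m\leqslant (B')^m$, or bypass it entirely by using continuity of the spectrum in the matrix entries, which gives $\mathrm{Sp}(A_{\epsilon_k})\to\mathrm{Sp}(A)$ and hence $\lambda=\mathrm{Sp}(A)$ directly. (You also implicitly use that the dominant eigenvalue produced by Theorem \ref{PF} \emph{is} $\mathrm{Sp}(A_\epsilon)$; the paper's statement of Theorem \ref{PF} only asserts dominance of real parts, so this identification should be stated as part of the Perron--Frobenius package or derived from $|\mu|\leqslant\lambda_\epsilon$ for positive matrices.) Second, the paper's definition of ``dominant'' requires the strict inequality $\mathrm{Sp}(A)>\mathfrak{R}\mu$ for $\mu\in\sigma(A)\setminus\{\mathrm{Sp}(A)\}$, whereas you only conclude $\mathfrak{R}\mu\leqslant\mathrm{Sp}(A)$; the strict version follows in one more line, since $\mathfrak{R}\mu=|\mu|$ forces $\mu$ to be real and non-negative, so $\mathfrak{R}\mu=\mathrm{Sp}(A)$ together with $|\mu|\leqslant\mathrm{Sp}(A)$ forces $\mu=\mathrm{Sp}(A)$. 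With those additions the proof is complete, and your closing remark correctly identifies simplicity and strict positivity as exactly the features lost in the limit.
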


\begin{proof} We refer to \cite[p. 66] {G}. \end{proof}

\begin{corollary}
The spectral bound of a non-negative off-diagonal matrix $A$ is an eigenvalue of $A$ and there is a corresponding eigenvector $k \geqslant 0$.
\label{ab}
\end{corollary}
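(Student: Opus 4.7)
The plan is to run the shift argument of Corollary \ref{abc} verbatim, but with Theorem \ref{P-F} replacing the irreducible form of Perron--Frobenius; the two features lost in passing from the irreducible to the general case are simplicity of the dominant eigenvalue and strict positivity of the eigenvector, and both losses are precisely what the present corollary tolerates.

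Concretely, the first step is to choose $c > 0$ large enough that $B := A + cI$ is non-negative; this is possible because $A$ has non-negative off-diagonal entries. Theorem \ref{P-F} then produces a non-negative eigenvector $k \geqslant 0$ with $Bk = \text{Sp}(B)\,k$. The spectral shift $\sigma(B) = c + \sigma(A)$ yields $s(B) = s(A) + c$, while the two inequalities $s(B) \leqslant \text{Sp}(B)$ (from $\mathfrak{R}\lambda \leqslant |\lambda|$) and $s(B) \geqslant \text{Sp}(B)$ (since $\text{Sp}(B) \in \sigma(B)$ is a real non-negative number and hence enters into the maximum defining $s(B)$) together give $\text{Sp}(B) = s(B) = s(A) + c$. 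The eigenvalue relation $(A+cI)k = (s(A)+c)\,k$ then reduces algebraically to $Ak = s(A)\,k$, which exhibits $s(A)$ as an eigenvalue of $A$ with the same non-negative eigenvector $k$.

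The main subtlety --- mild, but worth flagging --- is the identification $\text{Sp}(B) = s(B)$ for a general, possibly reducible, non-negative matrix $B$. Without irreducibility one cannot upgrade this common value to a simple eigenvalue, nor guarantee that $k$ has strictly positive entries, so parts (ii) and (iii) of Corollary \ref{abc} are not recoverable; but the present statement asks for neither, and the non-negativity of $k$ is visibly preserved by the algebraic identity $(A+cI)k = \mu k \Leftrightarrow Ak = (\mu - c)k$. The proof thereby becomes essentially a transcription of the earlier argument with Theorem \ref{P-F} in place of Theorem \ref{PF}.
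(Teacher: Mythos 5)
Your proof is correct and is exactly the intended argument: the paper leaves Corollary \ref{ab} without an explicit proof, but it is plainly meant to follow by the same shift $A \mapsto A + cI$ used in Corollary \ref{abc}, with Theorem \ref{P-F} substituted for Theorem \ref{PF}. Your identification $\text{Sp}(B) = s(B)$ via the two inequalities is a sound (and welcome) filling-in of the one step the paper glosses over.
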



Using Corollary \ref{ab}, we can prove the following spectral properties of a reducible transition matrix. Since the proof is essentially same as that of Theorem \ref{402}, we are not repeating it here again.

\begin{theorem}
A reducible transition matrix has zero as the dominant eigenvalue and $1$ is a left eigenvector corresponding to this dominant eigenvalue.
\label{Ma}
\end{theorem}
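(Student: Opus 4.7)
The plan is to mirror the proof of Theorem \ref{402} step for step, but to replace the appeal to Corollary \ref{abc} (which required irreducibility) with Corollary \ref{ab}, which applies to any matrix with non-negative off-diagonal entries and so covers the reducible case. Since the transition matrix $C$ is Kolmogorov, its off-diagonal entries are non-negative, so Corollary \ref{ab} supplies a right eigenvector $k \geqslant 0$ with $Ck = s(C)k$; the only thing lost compared to the irreducible situation is that $k$ is no longer guaranteed to be strictly positive, but $k$ is nonzero because it is an eigenvector.

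First I would observe that $0 \in \sigma(C)$ and that $\mathbf{1}$ is a left eigenvector for this eigenvalue: this is immediate from the defining property of the Kolmogorov matrix, namely that every column sums to zero, i.e.\ $\mathbf{1}^{\top} C = 0$. Next I would apply Corollary \ref{ab} to obtain a right eigenvector $k = (k_1,\dots,k_n) \geqslant 0$ with $Ck = s(C)\,k$. Summing the $n$ coordinate equations and again using the fact that each column of $C$ sums to zero, the left-hand side vanishes, yielding
\begin{equation*}
0 \;=\; s(C)\,(k_1 + \cdots + k_n).
\end{equation*}
Since $k \geqslant 0$ and $k \neq 0$, the scalar $k_1 + \cdots + k_n$ is strictly positive, forcing $s(C) = 0$. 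By definition of the spectral bound, this is exactly the statement that $0$ has the largest real part in $\sigma(C)$, i.e.\ $0$ is the dominant eigenvalue in the sense used in the paper for the reducible case.

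The main obstacle, and really the only place where care is needed, is the positivity argument in the summation step: in the irreducible setting Corollary \ref{abc} delivers $k>0$ so that $k_1 + \cdots + k_n > 0$ is automatic, whereas here Corollary \ref{ab} only gives $k\geqslant 0$. I would therefore emphasize that an eigenvector is by definition nonzero, so the non-negativity plus nonvanishing of $k$ still forces $\sum_i k_i > 0$, and the conclusion $s(C)=0$ goes through. I would also explicitly point out that, unlike in Theorem \ref{402}, no claim of simplicity is made and the Perron vector need not be strictly positive, consistent with the discussion preceding Theorem \ref{P-F}.
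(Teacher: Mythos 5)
Your proposal is correct and matches the paper's intent exactly: the paper itself omits the proof of Theorem \ref{Ma}, stating only that it is ``essentially the same as that of Theorem \ref{402}'' with Corollary \ref{ab} in place of Corollary \ref{abc}, which is precisely the argument you carry out. You also correctly flag and resolve the one point requiring care, namely that $k \geqslant 0$ rather than $k > 0$ still yields $k_1 + \cdots + k_n > 0$ because an eigenvector is nonzero.
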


Due to the reducibility property, the left and right Perron vectors have some special structures as described in the following theorems.

\begin{theorem}
An arbitrary reducible, non-negative matrix cannot have both left and right positive eigenvectors corresponding to the dominant eigenvalue unless the matrix is block diagonal.
\label{Ga}
\end{theorem}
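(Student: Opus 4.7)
The plan is to exploit the block-triangular form provided by the definition of reducibility: after a suitable permutation we may write
$$A = \left[\begin{array}{lr} B & D \\ 0 & T\end{array}\right],$$
with $B$ and $T$ square and $D\geqslant 0$. Let $\lambda$ denote the dominant eigenvalue (the spectral radius, by Theorem \ref{P-F}), and suppose $Av=\lambda v$ for a column vector $v>0$ and $wA=\lambda w$ for a row vector $w>0$. I would partition $v = (v_1, v_2)^{\mathsf{T}}$ and $w = (w_1, w_2)$ conformally with the block structure, so that all four sub-vectors $v_1,v_2,w_1,w_2$ are strictly positive.

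The key step is to cross the two eigenvalue equations. From $Av=\lambda v$ the upper block reads $Bv_1+Dv_2=\lambda v_1$, while from $wA=\lambda w$ the left block reads $w_1 B = \lambda w_1$. Multiplying the first equation on the left by $w_1$ and substituting the second gives
$$\lambda\, w_1 v_1 + w_1 D v_2 \;=\; \lambda\, w_1 v_1, \qquad \text{so that} \qquad w_1 D v_2 = 0.$$
Since $w_1 D v_2 = \sum_{i,j}(w_1)_i\, D_{ij}\, (v_2)_j$ is a sum of non-negative terms weighted by strictly positive factors $(w_1)_i$ and $(v_2)_j$, every entry $D_{ij}$ must vanish. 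Hence $D=0$ and $A$ is block diagonal in the permuted basis, which is the desired conclusion.

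I do not anticipate any serious obstacle: the only small point of care is making sure that the scalar $\lambda$ appearing in both the left and the right eigenvalue equation is the same real number, which is guaranteed because $\sigma(A)=\sigma(A^{\mathsf{T}})$ and the dominant eigenvalue is, by definition, uniquely specified. Beyond that the argument reduces to the single algebraic identity $w_1 D v_2 = 0$ together with an elementary non-negativity observation, so the proof is short.
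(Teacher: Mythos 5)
Your argument is correct, and it is worth noting that the paper does not actually prove Theorem \ref{Ga}: it simply cites Gantmacher \cite[p.~78]{G}. Your proposal therefore supplies a short self-contained proof where the paper defers to the literature. The computation checks out: with $A$ permuted to $\left[\begin{smallmatrix} B & D \\ 0 & T\end{smallmatrix}\right]$, the upper block of $Av=\lambda v$ gives $Bv_1+Dv_2=\lambda v_1$, the left block of $wA=\lambda w$ gives $w_1B=\lambda w_1$, and crossing them yields $w_1Dv_2=0$; since every term of $\sum_{i,j}(w_1)_i D_{ij}(v_2)_j$ is non-negative and the weights $(w_1)_i$, $(v_2)_j$ are strictly positive, $D=0$. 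Your worry about $\lambda$ is also handled correctly: the hypothesis already says both eigenvectors correspond to \emph{the} dominant eigenvalue, and $\sigma(A)=\sigma(A^{\mathsf T})$ makes this consistent. Two small remarks. First, what you obtain is block-diagonality with respect to the particular permutation exhibiting reducibility, which is what the theorem intends. Second, if one wants the stronger conclusion that $A$ splits into a direct sum of irreducible blocks (the normal form (\ref{Nor}) with all off-diagonal blocks zero), your argument recurses cleanly: once $D=0$, the sub-vectors $v_1,w_1$ and $v_2,w_2$ are positive right/left eigenvectors of $B$ and $T$ for the same $\lambda$ (which is then the spectral radius of each block), so the same step applies to any block that is still reducible. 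Gantmacher's treatment works through the normal form machinery; yours is more elementary and is exactly the kind of two-line verification one would want to include in the paper in place of the bare citation.
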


\begin{proof} See \cite[p. 78] {G}.  \end{proof}

\begin{theorem}
For a reducible transition matrix,  the right Perron vector must contain some zero elements unless the matrix is  block diagonal.
\label{500}
\end{theorem}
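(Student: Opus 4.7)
The plan is to reduce this statement to Theorem \ref{Ga} by passing from the Kolmogorov matrix $C$ to the non-negative shift $C+cI$, which is what Corollary \ref{abc} already uses to transfer Perron--Frobenius statements to matrices with non-negative off-diagonal entries. The key observations I would rely on are: (a) for every sufficiently large $c>0$, the matrix $C+cI$ is non-negative; (b) adding $cI$ does not alter the zero pattern of the off-diagonal blocks, so $C$ is reducible (respectively, block diagonal) if and only if $C+cI$ is; and (c) $C$ and $C+cI$ share the same right and left eigenvectors, only the eigenvalues shift by $c$.

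By Theorem \ref{Ma}, the dominant eigenvalue of $C$ is $0$ and $\mathbf{1}=(1,\dots,1)$ is a left eigenvector for it. Consequently, $\mathbf{1}$ is a strictly positive left eigenvector of $C+cI$ corresponding to the eigenvalue $c$, which, by the shifting argument in the proof of Corollary \ref{abc}, coincides with both $s(C+cI)$ and $\mathrm{Sp}(C+cI)$. Thus $c$ is the Perron eigenvalue of the non-negative matrix $C+cI$, and by Theorem \ref{P-F} there is a non-negative right Perron vector $k\geqslant 0$. This same $k$ is the right Perron vector of $C$ associated with $0$.

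Now suppose for contradiction that $C$ is \emph{not} block diagonal yet $k>0$ (i.e.\ has no zero entries). Then $C+cI$ is reducible, non-negative, not block diagonal, and possesses \emph{both} a positive left eigenvector $\mathbf{1}$ and a positive right eigenvector $k$ for its dominant eigenvalue. This directly contradicts Theorem \ref{Ga}. Hence, under the assumption that $C$ is not block diagonal, the non-negative vector $k$ must have at least one zero coordinate, which is the claim.

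The only real subtlety to check carefully is step (b): that passage from $C$ to $C+cI$ preserves the precise distinction between ``reducible'' and ``block diagonal,'' since Theorem \ref{Ga} hinges on the stronger non-block-diagonal hypothesis rather than mere reducibility. This is immediate from the definition because the scalar matrix $cI$ only modifies diagonal entries, leaving every off-diagonal block of any simultaneous permutation of rows and columns unchanged; so I expect the argument to go through without further complications.
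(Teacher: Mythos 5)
Your argument is correct and follows essentially the same route as the paper: the paper's one-line proof invokes Theorem \ref{Ma} together with the positivity of the left eigenvector $1$, implicitly appealing to Theorem \ref{Ga} to rule out a positive right Perron vector when $C$ is not block diagonal. You merely make explicit the shift to $C+cI$ needed so that Theorem \ref{Ga} (stated for non-negative matrices) literally applies to the Kolmogorov matrix $C$ --- a detail the paper leaves implicit but which your write-up correctly supplies.
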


\begin{proof} The proof immediately follows from Theorem \ref{Ma} and as $1$ is a left eigenvector corresponding to the zero eigenvalue. 
\end{proof}


Considering  reducible transition matrix, next we will see the possible connection between the structure of the matrix and the right Perron vector. Using the definition, a reducible matrix $C$ can be represented as
\begin{equation}
\left[\begin{array}{lr}
C' &  A \\ 0 & T
\end{array}\right],
\label{nor}
\end{equation}
where $C'$, $T$ are irreducible, square matrices. Also since $C$ is a transition matrix and as we have $0$ below $C'$, we find that $C'$ has zero as a simple dominant eigenvalue. The spectrum of $T$ has the property that $\sigma(T):= \{ \lambda \in \mathds{C} : \mathfrak{R}\, \lambda < 0 \}$.
If one of the matrices $C'$ or $T$ is reducible then it can also be represented in a form similar to (\ref{nor}), and we can carry on the above decomposition. Finally with suitable permutations,  $C$ can be represented in the following normal form:
\begin{equation}
\left[\begin{array}{cccccc}
C_{n_1} &  \cdots & 0 & A_{n_1,\,n_{m+1}} & \cdots & A_{n_1,\,n_n}  \\ \vdots & \vdots & \vdots & \vdots & \vdots & \vdots \\ 0 &  & C_{n_m} & A_{n_m,\,n_{m+1}} & \cdots & A_{n_m,\,n_n} \\ 0 & \cdots & 0 & T_{n_{m+1},\,n_{m+1}} & \cdots & T_{n_{m+1},\,n_n} \\ \vdots & \vdots & \vdots & \vdots & \vdots & \vdots \\ 0 & \cdots & 0 & 0 & \cdots & T_{n_n,\,n_n}
\end{array}\right],
\label{Nor}
\end{equation}
where  $C_{n_i}$, for $1 \leqslant i \leqslant m$,  are irreducible matrices and in each row $$A_{n_i, n_{m+1}}, \ldots, A_{n_i, n_n} \qquad  (1 \leqslant i \leqslant n),$$ at least one matrix is different from zero.
There is an interesting connection between $C_{n_i}$ and $T_{j,j}$  with the right Perron vector of a reducible transition matrix as described in the following result.

\begin{theorem}
For a reducible transition matrix in its normal form, $C_{n_i}$  correspond to the non-zero elements of the right Perron vector and $T_{j,j}$ correspond to the zero elements of the right Perron vector.
\label{800}
\end{theorem}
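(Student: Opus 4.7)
The plan is to work directly with the eigenvalue equation $Ck = 0$, exploiting the block-triangular normal form. I would begin by partitioning a right Perron vector as $k = (k^{(1)}, \ldots, k^{(m)}, k^{(m+1)}, \ldots, k^{(n)})$ conforming to the block decomposition in (\ref{Nor}), so that by Theorem \ref{Ma} the equation $Ck = 0$ splits into one equation per block row.

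Next I would handle the $T$-blocks by a bottom-up induction. Since the spectrum of each $T_{j,j}$ lies strictly in the open left half-plane, each $T_{j,j}$ is invertible, in particular $0 \notin \sigma(T_{j,j})$. The last block row of $Ck=0$ reads $T_{n_n, n_n} k^{(n)} = 0$, forcing $k^{(n)} = 0$. Assuming $k^{(\ell)} = 0$ for every $\ell > j$, the $j$-th block row collapses to $T_{n_j, n_j} k^{(j)} = 0$, and invertibility again forces $k^{(j)} = 0$. Iterating from $j = n$ down to $j = m+1$ shows that every $T$-block component of any Perron vector vanishes.

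Once the $T$-block components have been eliminated, the $i$-th block equation (for $1 \leqslant i \leqslant m$) reduces to $C_{n_i} k^{(i)} = 0$. Since each $C_{n_i}$ is an irreducible Kolmogorov matrix, Theorem \ref{402} tells us that $0$ is a \emph{simple} eigenvalue of $C_{n_i}$ with a strictly positive right eigenvector $v^{(i)}$. Therefore every solution $k^{(i)}$ of $C_{n_i} k^{(i)} = 0$ is a scalar multiple of $v^{(i)}$, so each $k^{(i)}$ is either zero or has all coordinates of one sign. To exhibit a genuine Perron vector (non-negative, by Corollary \ref{ab}) whose $C$-block components are all non-zero, I would choose strictly positive scalars $\alpha_i > 0$ and set $k^{(i)} = \alpha_i v^{(i)}$ for $1 \leqslant i \leqslant m$, together with $k^{(j)} = 0$ for $j > m$; this vector is non-negative, satisfies $Ck = 0$, and has precisely the claimed sign pattern.

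The only delicate point, and the one I would emphasize, is interpretational rather than computational: because the Perron eigenspace of the reducible matrix $C$ is $m$-dimensional, the Perron vector is not unique, so the statement must be read as identifying the block positions where components \emph{can} be non-zero (the $C_{n_i}$-blocks) versus those where they are forced to vanish (the $T_{j,j}$-blocks). The invertibility of the $T_{j,j}$-blocks makes the latter automatic, while Theorem \ref{402} applied block-wise makes the former generically true for the canonical non-negative Perron vector constructed above.
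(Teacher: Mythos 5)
Your proposal follows the same route as the paper's own proof: partition the Perron vector according to the normal form (\ref{Nor}), use the fact that each $T_{j,j}$ has spectrum in the open left half-plane (hence is invertible) to force the corresponding components to vanish by back-substitution from the bottom block row upward, and then observe that the remaining block equations collapse to $C_{n_i}k^{(i)}=0$ with each $C_{n_i}$ an irreducible Kolmogorov matrix. Where you improve on the paper is in the final step. The paper concludes ``hence each $k_1,\ldots,k_m$ is non-zero'' merely from the existence of a positive Perron vector for each $C_{n_i}$, but as you correctly note, $\ker C$ is $m$-dimensional, so an individual right Perron vector of $C$ may perfectly well have $k^{(i)}=0$ for some $i$; the claim as literally stated does not hold for every Perron vector when $m>1$. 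Your resolution --- constructing a distinguished non-negative eigenvector with all $C$-block components strictly positive and reading the theorem as identifying which block positions \emph{can} carry non-zero entries versus which are \emph{forced} to vanish --- is the right way to make the statement precise, and it repairs a genuine imprecision in the paper's own argument rather than merely reproducing it.
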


\begin{proof} From the property of the normal form, all  $C_{n_i}$, where $1 \leqslant i \leqslant m $, have zero as the dominant eigenvalue and all  $T_{j\,,j}$, where \smash{$n_{m+1} \leqslant j \leqslant n_n $,} have negative real parts. Let all  square matrices $C_{n_i}$ have orders, respectively, $n_i$, where $1 \leqslant i \leqslant m $ and all  $T_{j, j}$ have orders, respectively $n_j$, where $n_{m+1} \leqslant j \leqslant n_n $.  Now $C$ has $1$ as a left Perron vector and, by Theorem \ref{500}, a right Perron vector of $C$ must have some zero components.  Let $k:= (k_{1}, k_{2}, \ldots, k_n)$ be a right Perron vector of $C$, where $k_1:= (k_{11}, \ldots, k_{n_1})$, $\ldots$, $k_m:= (k_{n_{m-1} + 1}, \ldots, k_{n_m})$,
$k_{m+1}:= (k_{{n_m}+1}, \ldots, k_{n_{m+1}})$, \ldots,  $k_n:= (k_{n_{n-1}+1}, \ldots, k_{n_n})$.
Corresponding to the zero eigenvalue, we  have the following set of equations:
\begin{eqnarray}
C_{n_1} k_{1} + \cdots + 0 + A_{n_1,\, n_{m+1}} k_{m + 1} + \cdots + A_{n_1,\, n_n} k_n &=&  \nonumber 0, \\\nonumber \vdots\\\nonumber 0 + \cdots + C_{n_m} k_m + A_{n_m,\, n_{m+1}} k_{m + 1} + \cdots + A_{n_m,\, n_n} k_n &=&  0, \\\nonumber 0 + \cdots + 0 + T_{n_{m+1},\, n_{m+1}} k_{m+1} + \cdots + T_{n_{m+1},\, n_n}k_n &=& 0, \\\nonumber\vdots\\\nonumber 0 + \cdots + 0 + 0 + \cdots + T_{n_n,\,n_n} k_n &=& 0.
\end{eqnarray}
Since  each $T_{j,j} \, ( n_{m+1} \leqslant j \leqslant n_n)$ is non-singular, it is invertible and hence $k_{m+1} = \cdots = k_n = 0$. Since each  $C_{n_i}$ is irreducible,  it has positive Perron vector corresponding to the dominant eigenvalue zero and hence each  $k_1, \ldots, k_m$ is non-zero. This completes the proof. 
\end{proof}

\section{Interpretations to population dynamics}
The above spectral analysis can be used to model transition phenomenon in population dynamics. Existence of the dominant eigenvalue plays a crucial role in long time behaviour and positivity of the corresponding eigenvector (in irreducible case) gives the stable distribution where as the situation is quite different with a reducible structure.

\subsection{Irreducible transition matrix}
Let us first assume that our transition matrix $C$ is irreducible. Biological heuristics suggests that no geographical structure should persist for very large interstate transition rates; that is, for $\epsilon \to 0$. Here we also note that both biological and mathematical analysis rely on $\lambda = 0$ being the dominant, simple eigenvalue of $C$ for each with the corresponding positive eigenvector, denoted by $k$, and the left eigenvector $1$. Vector $k$ is normalized to satisfy $1 \cdot k = 1$ and $k = \left( k_1, \ldots, k_n \right)$ is the so-called stable patch structure; that is, the asymptotic (as $t \to \infty$ and disregarding demographic processes) distribution of the population among the patches.  Thus, in population theory, the components of $k$ are approximated as  $k_i \approx u_i/u$ for $i = 1, \ldots, n$, where
$u := \sum_{i=1}^n u_i.$ For a detailed asymptotic analysis of this model, we refer to \cite{BGS}.

\subsection{Reducible transition matrix}
With respect to transition states,  reducibility of the transition matrix $C$ produces different behaviour of what compared to previous case.  For our transition matrix $C$, we divide the states into two sets:
the states $\mathfrak{T}$ from which systems can make transit to any other states are called \textit{transient states} and the set of states $\mathfrak{C}$  are called \textit{closed states}, where once the system is in, cannot transit to other states no mater how long we iterate. Therefore, from the normal form \ref{Nor}, we find that all the $C_{n_i}$, where $1 \leqslant i \leqslant m$, corresponds to the closed states and all the $T_{n_j, n_j}$, where $m+1 \leqslant j \leqslant n$, corresponds to the transient states. Since from the transient states population moves to other states, it follows quite naturally that in long run, groups of the population resides to states $T_{n_j, n_j}$, where $m+1 \leqslant j \leqslant n$, moves to states $C_{n_i}$, where $1 \leqslant i \leqslant m$ and once they are in $C_{n_i}$, they confined to those regions. Therefore, in long run, the patches corresponds to $T_{n_j, n_j}$ become without any member. This is the physical interpretation of the Theorem \ref{800}, namely, transient states correspond to the zero components of the right eigenvector $k$ (corresponding to the zero eigenvalue) which corresponds to the stable-patch structure.

\section{Conclusion}
Here we have described how to model a population when the movements of the members are restricted.  If a transition matrix $C(t)$ depends on a parameter $t$ and if we assume the the normal form (\ref{Nor}) does not change with change of values $t$, then we will also have exactly the same properties what we found here. Note that non-autonomous
age-structured inhomogeneous population models with time dependent transition matrices have been studied in \cite{I92} and \cite{RM94}. A possible interesting case could be the following: Let us consider an irreducible transition matrix $C(t)$ and let when $t = t_0$, it becomes reducible \textit{i.e.}, at the  beginning, there is a movements between all the patches but after some time, from some patches, members  moves to all other patches and they never come back. The question is, how the spectral properties changes with the limit $t \to t_0$? This and other possible generalisations will be considered for future work.

\end{document}